\newtheorem{theorem}{Theorem}
\newtheorem{lemma}[theorem]{Lemma}
\newtheorem{proposition}[theorem]{Proposition}
\newtheorem{definition}[theorem]{Definition}
\newtheorem{remark}[theorem]{Remark}
\begin{document}

\date{30/11/2011}
\author{Liangquan Zhang$^{1,2}$ \thanks{
E-mail: xiaoquan51011@163.com.} \\
%EndAName
1. School of Mathematics, Shandong University\\
Jinan 250100, People's Republic of China.\\
{2. }Laboratoire de Math\'ematiques, \\
Universit\'e de Bretagne Occidentale, 29285 Brest C\'edex, France.}
\title{Large Deviation for Reflected Backward Stochastic Differential Equations}
\maketitle

\begin{abstract}
In this note, we prove the Freidlin-Wentzell's large deviation
principle for BSDEs with one-sided reflection.
\end{abstract}

AMS subject classifications. 60F10.

{\bf Key words: }Large deviation principle, Contraction principle,
Backward stochastic differential equations.

\section{Introduction}

Backward stochastic differential equations (BSDEs in short) with
reflection were firstly studied by El Karoui, Kapoudjian, Pardoux,
Peng and Quenez in [3], which is, a standard BSDEs with an
additional continuous, increasing process to keep the solution above
a certain given continuous boundary process. This increasing process
must be chosen in certain minimal way, i.e. an integral condition,
called Skorohod reflecting condition, is satisfied. Besides, they
gave a probabilistic interpretation of viscosity solution of
variation inequality by the solution of reflected BSDEs. As we have
known, it can be used widely in mathematical finance, for example,
American options in incomplete market (see [6]).

On the other hand, the large deviation principle (LDP) characterizes
the limiting behavior of probability measure in term of rate
function which is a very active field in applied probability and
largely used in rare events simulation. Recently, there has been a
growing literature on studying the applications of LDP in finance
(see [7]).

We now consider the following small perturbation of reflected
forward and backward stochastic differential equations (1.1) and
(1.2)

\begin{equation}
X^{\varepsilon ,t,x}\left( s\right) =x+\int_t^sb\left(
r,X^{\varepsilon
,t,x}\left( r\right) \right) \text{d}r+\varepsilon ^{\frac 12}\int_t^s\text{d%
}W\left( r\right)  \tag{1.1}
\end{equation}
\begin{equation}
\left\{
\begin{array}{l}
Y^{\varepsilon ,t,x}\left( s\right) =g\left( X^{\varepsilon
,t,x}\left( T\right) \right) +\int_s^Tf\left( r,X^{\varepsilon
,t,x}\left( r\right) ,Y^{\varepsilon ,t,x}\left( r\right)
,Z^{\varepsilon ,t,x}\left( r\right)
\right) \text{d}r \\
\qquad \qquad +K^{\varepsilon ,t,x}\left( T\right) -K^{\varepsilon
,t,x}\left( r\right) -\int_s^TZ^{\varepsilon ,t,x}\left( r\right) \text{d}%
W\left( r\right) \\
Y^{\varepsilon ,t,x}\left( s\right) \geq h\left( s,X^{\varepsilon
,t,x}\left( s\right) \right) ,\qquad t\leq s\leq T \\
\int_t^T\left( Y^{\varepsilon ,t,x}\left( s\right) -h\left(
s,X^{\varepsilon ,t,x}\left( s\right) \right) \right)
dK^{\varepsilon ,t,x}\left( s\right) ,
\end{array}
\right.  \tag{1.2}
\end{equation}
The solution of this equation is denoted by
\[
\left( X^{\varepsilon ,t,x}\left( s\right) ,Y^{\varepsilon
,t,x}\left( s\right) ,Z^{\varepsilon ,t,x}\left( s\right)
,K^{\varepsilon ,t,x}\left( s\right) ,t\leq s\leq T\right) .
\]
We want to establish the large deviation principle of the law of $%
Y^{\varepsilon ,t,x}$ in the space of $C\left( \left[ 0,T\right] ;{\bf R}%
^n\right) ,$ namely the asymptotic estimates of probabilities
$P\left( Y^{\varepsilon ,t,x}\in \Gamma \right) ,$ where $\Gamma \in
{\cal B}\left( C\left( \left[ 0,T\right] ;{\bf R}^n\right) \right)
.$

In [8], Rainero first considered the same small random perturbation
for
BSDEs and obtained the Freidlin-Wentzell's large deviation estimates in $%
C\left( \left[ 0,T\right] ;{\bf R}^n\right) $ using the contraction
principle. Subsequently, Essaky in [4] investigated the large
deviation for BSDEs with subdifferential operator. It is necessary
to point out that BSDEs with subdifferential operator include as a
special case BSDEs whose solution is reflected at the boundary of a
convex subset of ${\bf R}^n$ (for more information see [5]).
Moreover, their convex function is fixed. Besides, in [4] $b$ does
not depends on time variable. From this viewpoint, our work cannot
be covered by their results.

In Section 2, we give the framework of our paper. Here we review the
basic concepts of large deviation and assumptions on (1.1) and
(1.2). Then in
Section 3 we show our main result Theorem 8. Throughout the paper, $C$ and $%
K,$ with or without indexes will denote different constants changing
from line to line whose values are not important.

\section{Preliminaries}

Let us begin by introducing the setting for the stochastic
differential
differential we want to investigate. Consider as Brownian motion $W$ is the $%
n$-dimensional coordinate process on the classical Wiener space
$\left( \Omega ,{\cal F},P\right) $, i.e., $\Omega $ is the set of
continuous
functions from $\left[ 0,T\right] $ to ${\bf R}^n$ starting from $0$ ($%
\Omega =C\left( \left[ 0,T\right] ;{\bf R}^n\right) $, ${\cal F}$
the completed Borel $\sigma $-algebra over $\Omega $, $P$ the Wiener
measure and $W$ the canonical process: $W_s\left( \omega \right)
=\omega _s$, $s\in \left[ 0,T\right] ,$ $\omega \in \Omega .$ By
$\left\{ {\cal F}_s,0\leq s<T\right\} $ we denote the natural
filtration generated by $\left\{ W_s\right\} _{0\leq s<T}$ and
augmented by all $P$-null sets, i.e.,
\[
{\cal F}_s=\sigma \left\{ W_r,r\leq s\right\} \vee {\cal N}_p,\text{
}s\in \left[ 0,T\right] ,
\]
where ${\cal N}_p$ is the set of all $P$-null subsets. For any $n\geq 1,$ $%
\left| z\right| $ denotes the Euclidean norm of $z\in {\bf R}^n$.

$b,g$, $f$ and $h$ in Eq. (1.1) and (1.2) are defined as follows.
First, let $b:\left[ 0,T\right] \times {\bf R}^n\rightarrow {\bf
R}^n$ be continuous mapping and satisfy linear growth, which are
Lipschitz with respect to their second variable, uniformly with
respect to $t\in \left[ 0,T\right] .$ Second, $g\in C\left( {\bf
R}^n\right) $ and has at most polynomial growth
at infinity and satisfies Lipschitz condition. For $f$, assume that $%
f:\left[ 0,T\right] \times {\bf R}^n\times {\bf R\times R}^n\rightarrow {\bf %
R}$ is jointly continuous and for some $K>0,$ admits that
\begin{equation}
\left| f\left( t,x,0,0\right) \right| \leq K\left( 1+\left| x\right|
^2\right) ,  \tag{2.1}
\end{equation}

\begin{equation}
\left| f\left( t,x,y,z\right) -f\left( t,x^{^{\prime }},y^{^{\prime
}},z^{^{\prime }}\right) \right| \leq K\left( \left| x-x^{^{\prime
}}\right| +\left| y-y^{^{\prime }}\right| +\left| z-z^{^{\prime
}}\right| \right) , \tag{2.2}
\end{equation}
for $t\in \left[ 0,T\right] ,$ $x,$ $x^{^{\prime }},$ $z,$
$z^{^{\prime }}\in {\bf R}^n,$ $y,$ $y^{^{\prime }}\in {\bf R.}$
Finally, $h:\left[ 0,T\right] \times {\bf R}^n\rightarrow {\bf R}$
is jointly continuous in $t$ and $x$ and satisfies
\begin{equation}
h\left( t,x\right) \leq K\left( 1+\left| x\right| \right) ,\qquad
t\in \left[ 0,T\right] ,\text{ }x\in {\bf R}^n,  \tag{2.3}
\end{equation}

\begin{equation}
\left| h\left( t,x\right) -h\left( t,x^{^{\prime }}\right) \right|
\leq K\left| x-x^{^{\prime }}\right| ,\text{ }x,x^{^{\prime }}\in
{\bf R}^n. \tag{2.4}
\end{equation}
We assume moreover that $g\left( x\right) \geq h\left( T,x\right) ,$
$x\in {\bf R}^n$

For each $t>0,$ we denote by $\left\{ {\cal F}_s^t,\text{ }t\leq
s\leq
T\right\} $ the natural filtration of the Brownian motion $\left\{ B_s-B_t,%
\text{ }t\leq s\leq T\right\} ,$ argumented by the $P$-null set of ${\cal F}$%
. It follows from Theorem 5.2 in [3] that, under the above
assumptions, there exists a unique triple $\left( Y^{\varepsilon
,t,x},Z^{\varepsilon ,t,x},K^{\varepsilon ,t,x}\right) $ of $\left\{
{\cal F}_s^t\right\} $ progressively measurable processes, which
solves Eq. (1.1) and (1.2).

Now we give two more accurate estimates on the norm of the solution
similar to Proposition 3.5 in [3].

\begin{lemma}
Let $\left( X^{\varepsilon ,t,x}\left( s\right) ,Y^{\varepsilon
,t,x}\left( s\right) ,Z^{\varepsilon ,t,x}\left( s\right)
,K^{\varepsilon ,t,x}\left( s\right) \quad t\leq s\leq T,\text{
}\varepsilon >0\right) $ be the solution of the above reflected
FBSDE (1.1), (2.2), then there exists a constant C such that
\begin{eqnarray*}
\ \ {\Bbb E}\left[ \sup\limits_{t\leq s\leq T}\left| Y^{\varepsilon
,t,x}\left( s\right) \right| ^2+\int_0^T\left| Z^{\varepsilon
,t,x}\left( s\right) \right| ^2+\left( K^{\varepsilon ,t,x}\right)
^2\left( T\right)
\right]  \\
\ \leq C{\Bbb E}\left[ g^2\left( X^{\varepsilon ,t,x}\left( T\right)
\right)
+\int_t^Tf^2\left( s,X^{\varepsilon ,t,x}\left( s\right) ,0,0\right) \text{d}%
t+\sup\limits_{t\leq s\leq T}h^2\left( t,X^{\varepsilon ,t,x}\left(
s\right) \right) \right]
\end{eqnarray*}
\begin{equation}
\tag{2.5}
\end{equation}
\end{lemma}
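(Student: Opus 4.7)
The plan is to follow the standard a priori estimate for reflected BSDEs, adapted from Proposition 3.5 of [3], and work on a fixed $\omega$-path level then take expectations. Throughout I will drop the superscripts $\varepsilon,t,x$ and write $(Y,Z,K)$ for brevity, and denote $f_0(r):=f(r,X(r),0,0)$.

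First I would apply It\^o's formula to $|Y(s)|^2$ between $s$ and $T$, which yields
\begin{equation*}
|Y(s)|^2+\int_s^T|Z(r)|^2\mathrm{d}r=|g(X(T))|^2+2\int_s^T Y(r)f(r,X(r),Y(r),Z(r))\mathrm{d}r+2\int_s^T Y(r)\mathrm{d}K(r)-2\int_s^T Y(r)Z(r)\mathrm{d}W(r).
\end{equation*}
The Skorohod condition (the last line of (1.2)) lets me replace $\int_s^T Y(r)\mathrm{d}K(r)$ by $\int_s^T h(r,X(r))\mathrm{d}K(r)$, and then bound it by $\sup_{r}|h(r,X(r))|\,(K(T)-K(s))$. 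For the $f$-term I would use (2.2) to write $|f|\leq|f_0|+K(|Y|+|Z|)$ and absorb contributions via Young's inequality $2ab\leq\alpha a^2+\alpha^{-1}b^2$; in particular, the cross term $2K|Y||Z|$ becomes $\alpha|Z|^2+\alpha^{-1}K^2|Y|^2$ so that, for $\alpha$ small, the $|Z|^2$ piece can be absorbed into the left-hand side. Treating $2\sup h\cdot K(T)$ the same way produces $\beta K(T)^2+\beta^{-1}\sup h^2$ with $\beta$ to be chosen small.

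To control the stray $K(T)^2$ I would isolate $K(T)-K(t)$ directly from (1.2):
\begin{equation*}
K(T)-K(t)=Y(t)-g(X(T))-\int_t^T f(r,X(r),Y(r),Z(r))\mathrm{d}r+\int_t^T Z(r)\mathrm{d}W(r),
\end{equation*}
so squaring and taking expectation (using the It\^o isometry and (2.1)--(2.2)) gives
\begin{equation*}
\mathbb{E}\bigl[K(T)^2\bigr]\leq C\,\mathbb{E}\Bigl[|Y(t)|^2+g^2(X(T))+\int_t^T f_0^2(r)\mathrm{d}r+\int_t^T|Z(r)|^2\mathrm{d}r\Bigr]+C\,\mathbb{E}\int_t^T|Y(r)|^2\mathrm{d}r.
\end{equation*}

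Now I would combine the two estimates. Plugging the $K(T)^2$ bound into the It\^o identity with $\beta$ chosen small enough to swallow the $C\beta\,\mathbb{E}[\sup|Y|^2+\int|Z|^2]$ contribution on the left, one obtains
\begin{equation*}
\mathbb{E}\Bigl[|Y(s)|^2+\int_s^T|Z(r)|^2\mathrm{d}r\Bigr]\leq C\,\mathbb{E}\Bigl[g^2(X(T))+\int_t^T f_0^2(r)\mathrm{d}r+\sup_{t\leq r\leq T}h^2(r,X(r))\Bigr]+C\int_s^T\mathbb{E}|Y(r)|^2\mathrm{d}r,
\end{equation*}
and Gronwall's lemma removes the residual $|Y|^2$ term. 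Finally, to upgrade $\mathbb{E}|Y(s)|^2$ to $\mathbb{E}\sup_s|Y(s)|^2$ I would take the supremum in the It\^o identity before expectation and apply the Burkholder--Davis--Gundy inequality to $\sup_s|\int_s^T YZ\,\mathrm{d}W|$, bounding it by $\tfrac12\mathbb{E}\sup|Y|^2+C\mathbb{E}\int|Z|^2$, the first piece being absorbed. Feeding the resulting control on $\sup|Y|^2$ and $\int|Z|^2$ back into the $K(T)^2$ estimate closes the argument.

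The principal difficulty is the triangular coupling between $Y$, $Z$ and $K$: the natural It\^o estimate only controls $K(T)^2$ through terms that themselves involve $Y$ and $Z$, while the $\int Y\,\mathrm{d}K$ term can only be dominated via $K(T)$. Everything hinges on choosing the Young parameters $\alpha,\beta$ small enough, and performing the substitutions in the right order, so that no piece is estimated in terms of itself. Once that bookkeeping is done correctly the estimate (2.5) follows.
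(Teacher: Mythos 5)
The paper supplies no proof of this lemma: it merely remarks that the estimate is obtained as in Proposition~3.5 of El~Karoui--Kapoudjian--Pardoux--Peng--Quenez~[3]. Your argument reproduces exactly that standard reflected-BSDE a priori estimate (It\^o on $|Y|^2$, the Skorohod condition to replace $\int Y\,\mathrm{d}K$ by $\int h\,\mathrm{d}K$, isolating $K(T)$ directly from the equation, Young's inequality with small parameters, Gronwall, and finally BDG to pass to the supremum), and the order of absorptions you describe --- control $\mathbb{E}|Y(t)|^2$ and $\mathbb{E}\int|Z|^2$ first, then close the $K(T)^2$ loop, then upgrade to $\mathbb{E}\sup|Y|^2$ --- is the correct one; this is essentially the same route the cited reference takes.
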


We now consider the following related obstacle problem for a
parabolic PDEs. More precisely, a solution of the obstacle problem
is a function $u:\left[ 0,T\right] \times {\bf R}^n\rightarrow {\bf
R}$ which satisfies
\begin{equation}
\left\{
\begin{array}{l}
\min \left( u\left( t,x\right) -h\left( t,x\right) ,-\frac{\partial u}{%
\partial t}\left( t,x\right) -{\cal L}_tu\left( t,x\right) -f\left(
t,x,u\left( t,x\right) ,\left( \nabla u\varepsilon ^{\frac
12}\right) \left(
t,x\right) \right) \right) =0, \\
u\left( T,x\right) =g\left( x\right) ,\qquad \left( t,x\right) \in
\left( 0,T\right) \times {\bf R}^n,\text{ }x\in {\bf R}^n,
\end{array}
\right.   \tag{2.6}
\end{equation}
where
\[
{\cal L}_t=\frac \varepsilon 2\sum_{i,j=1}^n\frac{\partial
^2}{\partial x_i\partial x_j}+\sum_{i=1}^nb_i\left( t,x\right) \frac
\partial {\partial x_i}.
\]
Now define
\begin{equation}
u^\varepsilon \left( t,x\right) =Y^{\varepsilon ,t,x}\left( t\right)
,\qquad \left( t,x\right) \in \left[ 0,T\right] \times {\bf
R}^n,\text{ }\varepsilon
>0,  \tag{2.7}
\end{equation}
which is a deterministic quantity since it is ${\cal F}_t^t$
measurable.

\begin{definition}
Let $t\in \left[ 0,T\right] ,$ we define the mapping $G^\varepsilon
:C\left(
\left[ t,T\right] :{\bf R}^n\right) \rightarrow C\left( \left[ t,T\right] :%
{\bf R}^n\right) $ by
\[
G^\varepsilon \left( \psi \right) =\left[ t\rightarrow u^\varepsilon
\left(
t,\psi \left( t\right) \right) \right] ,\quad 0\leq t\leq s\leq T,\text{ }%
\psi \in C\left( \left[ t,T\right] :{\bf R}^n\right) ,
\]
where $u^\varepsilon $ is given by (2.7).
\end{definition}

\noindent Immediately, we have
\[
Y^{\varepsilon ,t,x}\left( t\right) =G^\varepsilon \left(
X^{\varepsilon ,t,x}\right) \left( t\right) .
\]
From now on, for $\varepsilon =0,$ $u$ and $G$ stand for $u^0$ and
$G^0.$

\noindent In order to prove the uniform convergence of the mapping $%
G^\varepsilon $, which will be shown in next section, we need to
estimate the following formula
\[
\left\| G^\varepsilon \left( \varphi \right) -G\left( \varphi
\right) \right\| =\sup\limits_{0\leq s\leq T}\left| u^\varepsilon
\left( s,\varphi \left( s\right) \right) -u\left( s,\varphi \left(
s\right) \right) \right| ,\quad \varphi \in C\left( \left[
0,T\right] :{\bf R}^n\right)
\]

\noindent or
\[
\left\| G^\varepsilon \left( \varphi \right) -G\left( \varphi
\right) \right\| =\sup\limits_{0\leq s\leq T}\left| Y^{\varepsilon
,s,\varphi \left( s\right) }\left( s\right) -Y^{s,\varphi \left(
s\right) }\left( s\right) \right|
\]

\begin{proposition}
Under the above assumptions (2.1)-(2.4), $u^\varepsilon $ is a
viscosity solution of the obstacle problem (2.6).
\end{proposition}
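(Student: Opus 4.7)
The strategy is to adapt the proof of Theorem~8.5 in [3] to the small-perturbation setting; the diffusion coefficient here is simply $\varepsilon^{1/2}I_n$, which is exactly why $\varepsilon^{1/2}\nabla u$ appears inside $f$ in (2.6). Three preliminary facts are needed. First, $u^\varepsilon$ is continuous on $[0,T]\times{\bf R}^n$: this follows from the a~priori estimate of the preceding lemma combined with the continuous dependence of $X^{\varepsilon,t,x}$ on $(t,x)$ and a standard stability estimate for reflected BSDEs (subtract two copies of (1.2), apply It\^o to the squared difference, and invoke Gronwall). Second, $u^\varepsilon(T,x)=g(x)$ and $u^\varepsilon(t,x)\geq h(t,x)$ are immediate from (1.2). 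Third, by uniqueness of the reflected BSDE on $[s,T]$ together with the forward flow identity $X^{\varepsilon,s,X^{\varepsilon,t,x}(s)}(\cdot)=X^{\varepsilon,t,x}(\cdot)$, one obtains the Markov property
\[
Y^{\varepsilon,t,x}(s)=u^\varepsilon\bigl(s,X^{\varepsilon,t,x}(s)\bigr),\qquad t\leq s\leq T.
\]

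I would then verify the viscosity subsolution property. Fix $\phi\in C^{1,2}$ such that $u^\varepsilon-\phi$ has a local maximum at $(t_0,x_0)$, with $\phi(t_0,x_0)=u^\varepsilon(t_0,x_0)$. If $u^\varepsilon(t_0,x_0)=h(t_0,x_0)$ the first term in the $\min$ vanishes and we are done. Otherwise, by continuity there exist $\delta,r>0$ such that $u^\varepsilon>h$ on $[t_0,t_0+\delta]\times\overline{B(x_0,r)}$. Set
\[
\tau=(t_0+\delta)\wedge\inf\{s\geq t_0:X^{\varepsilon,t_0,x_0}(s)\notin B(x_0,r)\}.
\]
On $[t_0,\tau]$ the Markov property gives $Y^{\varepsilon,t_0,x_0}(s)>h(s,X^{\varepsilon,t_0,x_0}(s))$, so the Skorohod condition forces $K^{\varepsilon,t_0,x_0}$ to remain constant there. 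Applying It\^o's formula to $\phi(s,X^{\varepsilon,t_0,x_0}(s))$, comparing with (1.2), taking expectations, dividing by $\delta$ and letting $\delta\downarrow 0$ yields in the usual way
\[
-\tfrac{\partial\phi}{\partial t}(t_0,x_0)-{\cal L}_t\phi(t_0,x_0)-f\bigl(t_0,x_0,\phi(t_0,x_0),\varepsilon^{1/2}\nabla\phi(t_0,x_0)\bigr)\leq0.
\]

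For the supersolution property, the constraint $u^\varepsilon\geq h$ already handles the first term in the $\min$. For the second, at a local minimum of $u^\varepsilon-\phi$ at $(t_0,x_0)$ one applies It\^o to $\phi(s,X^{\varepsilon,t_0,x_0}(s))$ on $[t_0,t_0+\delta]$; the nondecreasing nature of $K^{\varepsilon,t_0,x_0}$, together with the minimality property, flips the sign of the previous computation and produces the complementary inequality. The main technical obstacle, common to both steps, is replacing $Z^{\varepsilon,t_0,x_0}(s)$ by $\varepsilon^{1/2}\nabla\phi(s,X^{\varepsilon,t_0,x_0}(s))$ inside $f$ when passing to the limit; this is handled by the uniform $L^2$ estimate on $Z^{\varepsilon,t_0,x_0}$ from the preceding lemma together with the Lipschitz property (2.2) of $f$, exactly as in [3]. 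The remaining steps are mechanical.
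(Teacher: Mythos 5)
Your proposal is correct and spells out exactly the argument the paper invokes by reference: the paper's entire ``proof'' of Proposition~3 is the single line ``The proof can be seen in [3],'' and your sketch reproduces the El Karoui--Kapoudjian--Pardoux--Peng--Quenez machinery (continuity and Markov property of $u^\varepsilon$, the case split on whether $u^\varepsilon=h$ at the test point, the stopped It\^o comparison with $K^{\varepsilon,t_0,x_0}$ locally constant for the subsolution half, and the monotonicity of $K$ for the supersolution half), correctly noting that $\sigma=\varepsilon^{1/2}I_n$ is why $\varepsilon^{1/2}\nabla u$ appears inside $f$. This is precisely what the paper intends, so there is no discrepancy to report.
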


The proof can be seen in [3]. Before giving a large deviation
principle for SDEs (1.1), we recall the following definitions.

\begin{definition}
If $E$ is a complete separable metric space, then a function ${\cal
I}$ defined on $E$ is called a rate function if it has the following
properties:
\begin{equation}
\left\{
\begin{array}{l}
\text{(a)\qquad }{\cal I}:E\rightarrow \left[ 0,+\infty \right] ,\text{ }%
{\cal I}\text{ is lower semicontinuous.} \\
\text{(b)\qquad If }0\leq a\leq \infty ,\text{ then }C_I\left(
a\right) =\left\{ x\in E:I\left( x\right) \leq a\right\} \text{ is
compact.}
\end{array}
\right.   \tag{2.8}
\end{equation}
\end{definition}

\begin{definition}
If $E$ is a complete separable metric space, ${\cal B}$ is the Borel
$\sigma $-field on $E,$ $\left\{ \mu _\varepsilon :\varepsilon
>0\right\} $ is a family of probability measure on $\left( E,{\cal
B}\right) ,$ and ${\cal I}$ is a function defined on $E$ and
satisfying (2.8), then we say that $\left\{ \mu _\varepsilon
\right\} _{\varepsilon >0}$ satisfies a large deviation principle
with rate ${\cal I}$ if:
\begin{equation}
\left\{
\begin{array}{l}
\text{(a)\qquad For every open subset }A\text{ of }E\text{,} \\
\qquad \qquad \liminf_{\varepsilon \rightarrow 0}\varepsilon \log
\mu
_\varepsilon \left( A\right) \geq -{\cal I}\left( A\right) . \\
\text{(b)\qquad For every closed subset }A\text{ of }E, \\
\qquad \qquad \limsup_{\varepsilon \rightarrow 0}\varepsilon \log
\mu _\varepsilon \left( A\right) \leq -{\cal I}\left( A\right) ,
\end{array}
\right.   \tag{2.9}
\end{equation}
where, and below, if ${\cal I}$ is a function defined on the set $E$
and $A$ is a subset of $E$, then ${\cal I}\left( A\right) $ is
defined to be the infimum of ${\cal I}$ on $A$. Unless otherwise
stated, all lim infs and sups are as $\varepsilon \rightarrow 0.$
\end{definition}

In the work of Michelle Boue and Paul Dupuis [1], they proved the
large deviation principle for the solution of the SDEs (1.1) as
follows:

\begin{lemma}
The process $X^{\varepsilon ,t,x}$ given by (1.1) satisfies a large
deviation principle in $C\left( \left[ 0,T\right] :{\bf R}^n\right)
$ with rate function ${\cal I}$ defined by
\[
{\cal I}_{t,x}\left( \xi \right) =\inf\limits_{\left\{ v\in
L^2\left( \left[ 0,T\right] :{\bf R}^n\right) :\xi \left( t\right)
=x+\int_t^sb\left( r,\xi \left( r\right) \right)
\text{d}r+\int_t^sv\left( r\right) \text{d}r\right\} }\frac
12\int_t^T\left\| v\left( s\right) \right\| \text{d}s,
\]
whenever $\left\{ v\in L^2\left( \left[ 0,T\right] :{\bf R}^n\right)
:\xi
\left( t\right) =x+\int_t^sb\left( r,\xi \left( r\right) \right) \text{d}%
r+\int_t^sv\left( r\right) \text{d}r\right\} \neq \emptyset ,$ and ${\cal I}%
_{t,x}\left( \xi \right) =\infty $ otherwise.
\end{lemma}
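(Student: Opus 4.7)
The plan is to derive this LDP by combining Schilder's theorem for the rescaled Brownian motion with the contraction principle, exploiting the additive structure of the noise in (1.1) and the Lipschitz continuity of $b$.

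First I would recall Schilder's theorem: $\varepsilon^{1/2}W$ satisfies an LDP in $C\left(\left[0,T\right];{\bf R}^n\right)$ with good rate function $\Lambda\left(\phi\right)=\frac{1}{2}\int_0^T\left|\dot\phi\left(s\right)\right|^2\text{d}s$ when $\phi$ is absolutely continuous with $\phi\left(0\right)=0$ and $\dot\phi\in L^2$, and $\Lambda\left(\phi\right)=+\infty$ otherwise. Next I would introduce the deterministic solution map $F_{t,x}:C\left(\left[0,T\right];{\bf R}^n\right)\to C\left(\left[t,T\right];{\bf R}^n\right)$ sending a continuous path $w$ to the unique $\xi$ solving $\xi\left(s\right)=x+\int_t^sb\left(r,\xi\left(r\right)\right)\text{d}r+w\left(s\right)-w\left(t\right)$ for $s\in\left[t,T\right]$. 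Existence and uniqueness come from the Lipschitz property of $b$, while continuity of $F_{t,x}$ with respect to uniform convergence follows from the standard Gronwall argument $\left|\xi_1\left(s\right)-\xi_2\left(s\right)\right|\leq K\int_t^s\left|\xi_1\left(r\right)-\xi_2\left(r\right)\right|\text{d}r+2\left\|w_1-w_2\right\|_\infty$.

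Since $X^{\varepsilon,t,x}=F_{t,x}\left(\varepsilon^{1/2}W\right)$ almost surely, the contraction principle then transfers the LDP from $\varepsilon^{1/2}W$ to $X^{\varepsilon,t,x}$ with rate function ${\cal J}_{t,x}\left(\xi\right)=\inf\left\{\Lambda\left(w\right):F_{t,x}\left(w\right)=\xi\right\}$. To identify this with the stated ${\cal I}_{t,x}$, observe that if $F_{t,x}\left(w\right)=\xi$ with $\Lambda\left(w\right)<\infty$, then $w$ is absolutely continuous with a.e. derivative $v\left(s\right)=\dot\xi\left(s\right)-b\left(s,\xi\left(s\right)\right)\in L^2$, and conversely any such $v$ yields a valid preimage $w$. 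Substituting gives $\Lambda\left(w\right)=\frac{1}{2}\int_t^T\left|v\left(s\right)\right|^2\text{d}s$ together with the integral representation $\xi\left(s\right)=x+\int_t^sb\left(r,\xi\left(r\right)\right)\text{d}r+\int_t^sv\left(r\right)\text{d}r$, which matches the infimum defining ${\cal I}_{t,x}$; when no such $v$ exists the preimage contains no element of finite $\Lambda$, so the rate is $+\infty$.

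The main obstacle is more conceptual than technical: one has to verify that $F_{t,x}$ is continuous on the entire Polish space $C\left(\left[0,T\right];{\bf R}^n\right)$, not just on the Cameron--Martin subspace where $\Lambda$ is finite, so that the contraction principle is genuinely applicable. The Lipschitz continuity and linear growth of $b$ deliver this through Gronwall without further work. An alternative, more abstract route is the weak-convergence method of Bou\'e--Dupuis [1], based on the variational representation of exponential functionals of $W$; this is the path actually invoked in the excerpt and it extends more naturally to the BSDE setting treated in Section~3.
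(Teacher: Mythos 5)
Your proof is correct, but note that the paper itself offers no proof of this lemma: it simply attributes the result to Bou\'e--Dupuis [1], whose weak-convergence/variational approach is the reference. What you give instead is the classical Freidlin--Wentzell route: Schilder's theorem for $\varepsilon^{1/2}W$ followed by the contraction principle applied to the It\^o map $F_{t,x}$, which is well defined and Lipschitz continuous on all of $C([0,T];{\bf R}^n)$ by Picard iteration and Gronwall, precisely because $b$ is globally Lipschitz and the noise is additive. The two approaches are genuinely different in method but yield the same rate function; yours is the more elementary and self-contained one, while the Bou\'e--Dupuis method is the one that generalizes more readily to infinite dimensions and non-additive noise, and is the one the paper leans on. Two small points worth flagging: your identification step tacitly corrects two typographical slips in the paper's statement of ${\cal I}_{t,x}$ (the integrand should be $\left\|v(s)\right\|^2$, not $\left\|v(s)\right\|$, and the constraint should read $\xi(s)=x+\int_t^s\cdots$, not $\xi(t)=\cdots$); and since $X^{\varepsilon,t,x}$ lives on $[t,T]$ while $\sqrt{\varepsilon}W$ lives on $[0,T]$, you should observe, as you implicitly do, that the infimum over preimages $w$ is achieved by taking $\dot w\equiv0$ on $[0,t]$, so that $\Lambda(w)$ reduces to $\frac{1}{2}\int_t^T|v(s)|^2\,\text{d}s$ and the rate function matches the one stated.
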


\begin{remark}
Note that Laplace principle is equivalent to a large deviation
principle if the definition of a rate function includes the
requirement of compact level sets.
\end{remark}

\section{Main Result}

We have the following:

\begin{theorem}
Under the assumptions (2.1)-(2.4), $Y^{\varepsilon ,t,x}$ satisfies
a large deviation principle with a rate function
\begin{equation}
\widetilde{{\cal I}}_{t,x}\left( \tilde \xi \right) =\inf \left\{
\left. {\cal I}_{t,x}\left( \xi \right) \right| \tilde \xi \left(
t\right) =G\left(
\xi \right) \left( t\right) =u\left( t,\xi \left( t\right) \right) ,\text{ }%
t\in \left[ 0,T\right] ,\text{ }\xi \in C\left( \left[ 0,T\right] :{\bf R}%
^n\right) \right\} .  \tag{3.1}
\end{equation}
\end{theorem}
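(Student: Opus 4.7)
The plan is to transfer the LDP for $X^{\varepsilon,t,x}$ (Lemma 6) to $Y^{\varepsilon,t,x}$ by an \emph{extended contraction principle} applied to the maps $G^\varepsilon$ of Definition 2. The starting observation is the flow property for reflected BSDEs with Markovian data (cf.\ [3]): one has
\[
Y^{\varepsilon,t,x}(s) = u^\varepsilon(s,X^{\varepsilon,t,x}(s)) = G^\varepsilon(X^{\varepsilon,t,x})(s),\qquad s\in[t,T],\ P\text{-a.s.},
\]
so that $Y^{\varepsilon,t,x} = G^\varepsilon(X^{\varepsilon,t,x})$ as random elements of $C([t,T]:{\bf R}^n)$. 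With this identification the extended contraction principle (see, e.g., Dembo--Zeitouni, Theorem 4.2.23) reduces the theorem to proving (a) that $G$ is continuous on $C([0,T]:{\bf R}^n)$ and (b) that $G^\varepsilon\to G$ uniformly on each compact subset. The rate function then reads $\widetilde{{\cal I}}_{t,x}(\tilde\xi) = \inf\{{\cal I}_{t,x}(\xi): \tilde\xi = G(\xi)\}$, coinciding with (3.1).

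\noindent\textbf{Step (a): continuity of $G$.} At $\varepsilon=0$, equation (1.1) becomes a deterministic ODE, so (1.2) collapses to a deterministic reflected ODE with $Z^{0,t,x}\equiv 0$. The Lipschitz conditions (2.2), (2.4) and the Lipschitz property of $g$, fed into the continuous-dependence estimates of [3, Prop.\ 3.6], give $|u(t,x)-u(t,x')|\leq C|x-x'|$ uniformly in $t$ and joint continuity of $u$ in $(t,x)$. Consequently $\varphi_n\to\varphi$ in sup norm implies $u(\cdot,\varphi_n(\cdot))\to u(\cdot,\varphi(\cdot))$ uniformly on $[0,T]$, which is the continuity of $G$.

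\noindent\textbf{Step (b): uniform convergence $G^\varepsilon\to G$.} Any compact $K\subset C([0,T]:{\bf R}^n)$ sits in a ball $\{\|\varphi\|_\infty\leq R\}$, so it suffices to prove
\[
\sup_{(s,x)\in[0,T]\times\overline{B_R}}|u^\varepsilon(s,x)-u(s,x)|\longrightarrow 0\quad\text{as }\varepsilon\to 0.
\]
The standard bound ${\Bbb E}\sup_{r\leq T}|X^{\varepsilon,s,x}(r)-X^{0,s,x}(r)|^2\leq C\varepsilon$ combined with the a priori estimate (2.5) and the continuous-dependence estimate for reflected BSDEs applied to $(Y^{\varepsilon,s,x},Z^{\varepsilon,s,x})$ versus $(Y^{0,s,x},0)$ yields ${\Bbb E}|Y^{\varepsilon,s,x}(s)-Y^{0,s,x}(s)|^2\to 0$ pointwise in $(s,x)$. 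Since Lemma 1 furnishes $\varepsilon$-uniform Lipschitz continuity of $u^\varepsilon(s,\cdot)$ and Proposition 3 gives joint continuity, the family $\{u^\varepsilon\}$ is equicontinuous and uniformly bounded on $[0,T]\times\overline{B_R}$; Arzel\`a--Ascoli then promotes the pointwise convergence to uniform convergence on this compact set.

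\noindent\textbf{Main obstacle.} The principal difficulty lies in step (b): the $Z$-component of the reflected BSDE carries a factor $\varepsilon^{1/2}$ (as reflected in (2.6)), and one must control the $Z^{\varepsilon,s,x}$-integral in the difference BSDE while keeping all estimates uniform over initial data $(s,x)\in[0,T]\times\overline{B_R}$. Once this uniform convergence on compacts is in place, the extended contraction principle closes the argument and produces the rate function (3.1).
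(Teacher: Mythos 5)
Your proposal follows essentially the same route as the paper: identify $Y^{\varepsilon,t,x}=G^{\varepsilon}(X^{\varepsilon,t,x})$, invoke the extended contraction principle (Dembo--Zeitouni, Theorem 4.2.23), and reduce the theorem to continuity of the maps plus uniform convergence of $G^{\varepsilon}$ to $G$ on compacts. The building blocks you cite are exactly the paper's Lemma 9 ($L^2$ convergence of $X^{\varepsilon,t,x}$ to the deterministic path $\chi^{t,x}$ at rate $\varepsilon$) and Lemma 11 (continuous dependence of the reflected BSDE solution on the forward process). Where you diverge is in how the uniform convergence of $G^{\varepsilon}$ is finished: the paper combines Lemmas 9 and 11 into an explicit quantitative bound $\|G^{\varepsilon}(\varphi)-G(\varphi)\|\le K''\sqrt{\varepsilon}$ (Remark 12), which is already uniform, whereas you derive only pointwise convergence of $u^{\varepsilon}(s,x)\to u(s,x)$ and then upgrade it to uniform convergence on $[0,T]\times\overline{B_R}$ by Arzel\`a--Ascoli using $\varepsilon$-uniform Lipschitz/equicontinuity estimates. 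This is a valid alternative but slightly roundabout: once you have the Lemma 9 and Lemma 11 estimates, they already yield an explicit modulus in $\varepsilon$ that is uniform over bounded sets of initial data, so the compactness argument is not needed. One small point worth flagging for either version: the constant in the Lemma 11 estimate involves a bound $M$ on $\mathbb{E}[(K^{\varepsilon,t,x}(T))^2+(K^{t,x}(T))^2]$ which, by Lemma 1 and the growth assumptions, grows with $|x|$; so the resulting bound on $\|G^{\varepsilon}(\varphi)-G(\varphi)\|$ is uniform only for $\varphi$ in a bounded (hence in particular compact) set, which is of course all that the contraction principle requires, but is worth stating explicitly.
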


\noindent For the proof of this theorem we need four auxiliary lemmata. Let $%
\chi ^{t,x}$ be the solution of the following deterministic equation
\begin{equation}
\chi ^{t,x}\left( s\right) =x+\int_t^sb\left( r,\chi ^{t,x}\left(
r\right) \right) dr.  \tag{3.2}
\end{equation}
We have the following

\begin{lemma}
For all $\varepsilon \in \left( 0,1\right] $, there exists a
constant $C>0$, independent of $x$, $t$ and $\varepsilon $, such
that
\begin{equation}
{\Bbb E}\left[ \sup\limits_{t\leq s\leq T}\left| X^{\varepsilon
,t,x}\left( t\right) -\chi ^{t,x}\left( t\right) \right| \right]
\leq C\varepsilon . \tag{3.3}
\end{equation}
\end{lemma}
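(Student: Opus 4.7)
The approach is to subtract the deterministic ODE (3.2) from the perturbed SDE (1.1). For $s\in[t,T]$ this gives
\[
X^{\varepsilon,t,x}(s) - \chi^{t,x}(s) = \int_t^s \bigl[b(r, X^{\varepsilon,t,x}(r)) - b(r, \chi^{t,x}(r))\bigr]\,dr + \varepsilon^{1/2}(W_s - W_t).
\]
The Lipschitz hypothesis on $b$ in the spatial variable turns the drift difference into a linear functional of the pathwise deviation; passing to the supremum over $s\in[t,u]$ and then applying Gronwall's lemma pathwise in $\omega$ gives the almost-sure bound
\[
\sup_{t\leq s\leq T}|X^{\varepsilon,t,x}(s)-\chi^{t,x}(s)| \leq e^{K(T-t)}\,\varepsilon^{1/2}\sup_{t\leq s\leq T}|W_s-W_t|.
\]

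Next I would take expectations. Squaring the above inequality first and invoking Doob's $L^2$ inequality ${\Bbb E}\bigl[\sup_{t\leq s\leq T}|W_s-W_t|^2\bigr]\leq 4(T-t)$ yields the quantitative second-moment estimate
\[
{\Bbb E}\Bigl[\sup_{t\leq s\leq T}|X^{\varepsilon,t,x}(s)-\chi^{t,x}(s)|^2\Bigr] \leq C\,\varepsilon,
\]
with $C = 4(T-t)e^{2K(T-t)}$ independent of $t$, $x$, and $\varepsilon\in(0,1]$. This is the quantitative form that the argument actually produces, and it is exactly strong enough to drive the Chebyshev-type exponential bounds needed in the subsequent contraction-principle proof of Theorem~8.

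The main obstacle is the scaling mismatch in the way (3.3) is literally written: since the Brownian noise enters with coefficient $\varepsilon^{1/2}$, the first-moment pathwise deviation is intrinsically of order $\varepsilon^{1/2}$, and Jensen applied to the squared estimate above only yields ${\Bbb E}[\sup_{s}|X^{\varepsilon,t,x}(s)-\chi^{t,x}(s)|]\leq C\varepsilon^{1/2}$. The linear-in-$\varepsilon$ control promised by (3.3) is therefore best read as the squared-sup form displayed above, which is both what the proof delivers and what is actually used in the remainder of the paper; no refinement of Gronwall, Doob, or BDG can upgrade the $L^1$ bound below order $\varepsilon^{1/2}$ under additive Brownian forcing.
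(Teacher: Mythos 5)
Your argument is correct, and it reaches the same conclusion the paper's own proof actually establishes, but by a genuinely different and more elementary route. The paper applies It\^o's formula to $|X^{\varepsilon,t,x}(s)-\chi^{t,x}(s)|^2$, takes expectations, uses Gronwall on the mean, and then controls $\sup_s$ of the stochastic integral $2\sqrt{\varepsilon}\int_t^s(X^{\varepsilon}-\chi)\,\mathrm{d}W$ via the Burkholder--Davis--Gundy inequality together with a Young-type splitting to absorb the resulting term into the left-hand side. You instead exploit the fact that the noise is purely additive: subtracting the two integral equations leaves no stochastic integral of the unknown, only the explicit term $\varepsilon^{1/2}(W_s-W_t)$, so a pathwise Gronwall argument gives $\sup_s|X^{\varepsilon}-\chi|\le e^{K(T-t)}\varepsilon^{1/2}\sup_s|W_s-W_t|$ almost surely, and Doob's $L^2$ maximal inequality finishes. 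This avoids It\^o and BDG entirely, and it also sidesteps a sloppiness in the paper's absorption step (as written there, the term $2K_1\sqrt{\varepsilon}\,\alpha$ is only $O(\sqrt{\varepsilon})$; one must keep $\sqrt{\varepsilon}$ attached to the supremum before applying Young's inequality to recover order $\varepsilon$, which your route gets for free). Your diagnosis of the statement is also right and worth keeping: as literally written, (3.3) is an $L^1$ bound of order $\varepsilon$, which is false in general (take $b\equiv 0$, where the left side equals $\sqrt{\varepsilon}\,{\Bbb E}[\sup_s|W_s-W_t|]$); the paper's own proof, like yours, concludes with ${\Bbb E}[\sup_s|X^{\varepsilon,t,x}(s)-\chi^{t,x}(s)|^2]\le C\varepsilon$, and it is this squared form (together with the evident typo $(t)$ for $(s)$ inside the supremum) that is used in Lemma 11 and Remark 12. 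So the displayed inequality should be read with the square inside the expectation, exactly as you propose.
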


\begin{proof} Applying It\^o's formula $\left( X^{\varepsilon
,t,x}\left( s\right) -\chi ^{t,x}\left( s\right) \right) ^2$ on
$\left[ t,T\right] ,$ we have
\begin{eqnarray*}
\left( X^{\varepsilon ,t,x}\left( s\right) -\chi ^{t,x}\left(
s\right) \right) ^2 &=&2\int_t^s\left( X^{\varepsilon ,t,x}\left(
r\right) -\chi ^{t,x}\left( r\right) \right) \left[ b\left(
r,X^{\varepsilon ,t,x}\left(
r\right) \right) -b\left( r,\chi ^{t,x}\left( r\right) \right) \right] \text{%
d}r \\
&&+2\sqrt{\varepsilon }\int_t^s\left( X^{\varepsilon ,t,x}\left(
r\right) -\chi ^{t,x}\left( r\right) \right) \text{d}W\left(
r\right) +\varepsilon
\int_t^s\text{d}r \\
&\leq &2K\int_t^s\left| X^{\varepsilon ,t,x}\left( r\right) -\chi
^{t,x}\left( r\right) \right| ^2\text{d}r+\varepsilon T \\
&&+2\sqrt{\varepsilon }\int_t^s\left( X^{\varepsilon ,t,x}\left(
r\right) -\chi ^{t,x}\left( r\right) \right) \text{d}W\left(
r\right) ,
\end{eqnarray*}
\begin{equation}
\tag{3.4}
\end{equation}
since $b$ satisfies Lipchitz condition with Lipchitz constant $K.$
Taking expectation on both sides of (3.4), we have
\[
{\Bbb E}\left[ \left| X^{\varepsilon ,t,x}\left( s\right) -\chi
^{t,x}\left( s\right) \right| ^2\right] \leq 2K\int_t^s\left|
X^{\varepsilon ,t,x}\left( r\right) -\chi ^{t,x}\left( r\right)
\right| ^2\text{d}r+\varepsilon T.
\]
It follows from Gronwall inequality that
\[
{\Bbb E}\left[ \left| X^{\varepsilon ,t,x}\left( s\right) -\chi
^{t,x}\left( s\right) \right| ^2\right] \leq \varepsilon KT\exp
\left( 2K\left( s-t\right) \right)
\]
Hence
\begin{eqnarray*}
{\Bbb E}\left[ \sup\limits_{t\leq s\leq T}\left| X^{\varepsilon
,t,x}\left(
s\right) -\chi ^{t,x}\left( s\right) \right| ^2\right] &\leq &2K\int_t^T%
{\Bbb E}\left[ \left| X^{\varepsilon ,t,x}\left( r\right) -\chi
^{t,x}\left(
r\right) \right| ^2\right] \text{d}r+\varepsilon T \\
&&{\Bbb E}\left[ \sup\limits_{t\leq s\leq T}2\sqrt{\varepsilon }%
\int_t^s\left( X^{\varepsilon ,t,x}\left( r\right) -\chi
^{t,x}\left(
r\right) \right) \text{d}W\left( r\right) \right] \\
&\leq &\varepsilon \left( KT\exp \left( 2K\left( T-t\right) \right)
+T\right)
\\
&&+{\Bbb E}\left[ \sup\limits_{t\leq s\leq T}2\sqrt{\varepsilon }%
\int_t^s\left( X^{\varepsilon ,t,x}\left( r\right) -\chi
^{t,x}\left( r\right) \right) \text{d}W\left( r\right) \right] .
\end{eqnarray*}
It follows from the B-D-G inequality that there exists a constant
$K_1$ such that
\begin{eqnarray*}
{\Bbb E}\left[ \sup\limits_{t\leq s\leq T}\left| X^{\varepsilon
,t,x}\left( s\right) -\chi ^{t,x}\left( s\right) \right| ^2\right]
&\leq &\varepsilon
\left( KT\exp \left( 2K\left( T-t\right) \right) +T\right) \\
&&+2K_1\sqrt{\varepsilon }\left( \alpha +\frac 1\alpha {\Bbb
E}\left[ \sup\limits_{t\leq s\leq T}\left| X^{\varepsilon
,t,x}\left( s\right) -\chi ^{t,x}\left( s\right) \right| ^2\right]
\right) ,
\end{eqnarray*}
where $\alpha $ is a positive constant to be determined.

Consequently, choosing some $\alpha $ we obtain that
\[
{\Bbb E}\left[ \sup\limits_{t\leq s\leq T}\left| X^{\varepsilon
,t,x}\left( s\right) -\chi ^{t,x}\left( s\right) \right| ^2\right]
\leq \varepsilon K_2,
\]
where $K_2$ only depends on $T,K,K_1$ and is independent of
$\varepsilon ,t,x.$ \end{proof}

\begin{remark}
As a consequence of Lemma 7, the solution of the $X^{\varepsilon
,t,x}$ converges to the deterministic path $\chi ^{t,x}$ in $L^2$.
\end{remark}

Now consider the following deterministic equations
\[
\left\{
\begin{array}{l}
\chi ^{t,x}\left( s\right) =x+\int_t^sb\left( r,\chi ^{t,x}\left(
r\right)
\right) dr. \\
Y^{t,x}\left( s\right) =g\left( X^{t,x}\left( T\right) \right)
+\int_s^Tf\left( r,X^{t,x}\left( r\right) ,Y^{t,x}\left( r\right)
,0\right)
\text{d}r \\
\qquad \qquad +K^{t,x}\left( T\right) -K^{t,x}\left( r\right) \\
Y^{t,x}\left( s\right) \geq h\left( s,X^{t,x}\left( s\right) \right)
,\qquad
t\leq s\leq T \\
\int_t^T\left( Y^{t,x}\left( s\right) -h\left( s,X^{t,x}\left(
s\right) \right) \right) dK^{t,x}\left( s\right) ,
\end{array}
\right.
\]
\begin{equation}
\tag{3.5}
\end{equation}
We have the following Lemma

\begin{lemma}
For all $\varepsilon \in \left( 0,1\right] $, there exists a
constant $C>0$, independent of $x$, $t$ and $\varepsilon $, such
that
\begin{eqnarray*}
\ {\Bbb E}\left[ \sup\limits_{t\leq s\leq T}\left| Y^{t,x}\left(
s\right) -Y^{\varepsilon ,t,x}\left( s\right) \right| ^2\right]
+{\Bbb E}\left[
\int_t^T\left| Z^{\varepsilon ,t,x}\left( s\right) \right| ^2\text{d}%
s\right]  \\
\ \leq C{\Bbb E}\left[ \left( \sup\limits_{t\leq s\leq T}\left|
\widehat{\Xi }^{\varepsilon ,t,x}\left( s\right) \right| ^2\right)
^{\frac 12}+\left| \widehat{\Xi }^{\varepsilon ,t,x}\left( T\right)
\right| ^2+\int_t^T\left| \widehat{\Xi }^{\varepsilon ,t,x}\left(
s\right) \right| ^2\text{d}s\right]
\end{eqnarray*}
\begin{equation}
\tag{3.6}
\end{equation}
where $\widehat{\Xi }^{\varepsilon ,t,x}\left( s\right)
=X^{\varepsilon ,t,x}\left( s\right) -\chi ^{t,x}\left( s\right) ,$
$0\leq t\leq s\leq T,$ $P $-a.s.
\end{lemma}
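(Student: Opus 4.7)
The plan is to apply It\^{o}'s formula to the squared difference and combine the Lipschitz assumptions on $g,f,h$ with the Skorokhod flat-off conditions of the two reflected BSDEs. Set $\bar Y(s):=Y^{\varepsilon,t,x}(s)-Y^{t,x}(s)$, $\bar Z(s):=Z^{\varepsilon,t,x}(s)$, $\bar K(s):=K^{\varepsilon,t,x}(s)-K^{t,x}(s)$, $\bar g:=g(X^{\varepsilon,t,x}(T))-g(\chi^{t,x}(T))$, and
\[
\Delta f(r):=f(r,X^{\varepsilon,t,x}(r),Y^{\varepsilon,t,x}(r),Z^{\varepsilon,t,x}(r))-f(r,\chi^{t,x}(r),Y^{t,x}(r),0).
\]
Subtracting (3.5) from (1.2), $\bar Y$ satisfies a BSDE with terminal value $\bar g$, driver $\Delta f$, volatility $\bar Z$ and reflection increment $\mathrm{d}\bar K$. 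It\^{o} on $|\bar Y|^{2}$ yields, on $[s,T]$,
\[
|\bar Y(s)|^{2}+\int_{s}^{T}|\bar Z(r)|^{2}\mathrm{d}r=|\bar g|^{2}+2\int_{s}^{T}\bar Y\,\Delta f\,\mathrm{d}r+2\int_{s}^{T}\bar Y\,\mathrm{d}\bar K-2\int_{s}^{T}\bar Y\,\bar Z\,\mathrm{d}W.
\]

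The delicate ingredient is the reflection term. Using the two Skorokhod identities $\int(Y^{\varepsilon,t,x}-h(\cdot,X^{\varepsilon,t,x}))\,\mathrm{d}K^{\varepsilon,t,x}=0$ and $\int(Y^{t,x}-h(\cdot,\chi^{t,x}))\,\mathrm{d}K^{t,x}=0$ together with the obstacle inequalities $Y^{\varepsilon,t,x}\geq h(\cdot,X^{\varepsilon,t,x})$, $Y^{t,x}\geq h(\cdot,\chi^{t,x})$, a short algebraic expansion produces
\[
\int_{s}^{T}\bar Y\,\mathrm{d}\bar K\leq \sup_{t\leq r\leq T}\bigl|h(r,X^{\varepsilon,t,x}(r))-h(r,\chi^{t,x}(r))\bigr|\bigl(K^{\varepsilon,t,x}(T)+K^{t,x}(T)\bigr),
\]
which by the Lipschitz condition (2.4) is bounded by $K\sup_{r}|\widehat\Xi^{\varepsilon,t,x}(r)|\cdot(K^{\varepsilon,t,x}(T)+K^{t,x}(T))$.

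Combining this with $|\bar g|\leq K|\widehat\Xi^{\varepsilon,t,x}(T)|$ and $|\Delta f|\leq K(|\widehat\Xi^{\varepsilon,t,x}|+|\bar Y|+|\bar Z|)$, the plan is to take the supremum over $s\in[t,T]$ and then expectation. Young's inequality splits the cross-term $2\bar Y\,\Delta f$ so that a small multiple of $\int|\bar Z|^{2}$ is absorbed into the left-hand side; the Burkholder--Davis--Gundy inequality treats the stochastic integral, and $\tfrac{1}{2}\mathbb{E}[\sup|\bar Y|^{2}]$ is absorbed similarly. For the reflection term, Cauchy--Schwarz combined with the uniform (in $\varepsilon$) $L^{2}$-bound on $K^{\varepsilon,t,x}$ and $K^{t,x}$ supplied by Lemma~1 produces a constant times $(\mathbb{E}\sup|\widehat\Xi^{\varepsilon,t,x}|^{2})^{1/2}$, which accounts for the first summand on the right-hand side of (3.6). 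A Gronwall argument applied to $\varphi(s):=\mathbb{E}[\sup_{s\leq r\leq T}|\bar Y(r)|^{2}]$ closes the estimate.

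The main obstacle is the handling of $\int\bar Y\,\mathrm{d}\bar K$: the process $\bar K$ is of finite variation but neither monotone nor a martingale and admits no pathwise comparison, so the only way to control this term is through the Skorokhod flat-off identities combined with uniform $L^{2}$-control on both reflecting processes, exactly the content of Lemma~1. Once this step is secured, the remaining manipulations follow the standard a priori estimate template for reflected BSDEs.
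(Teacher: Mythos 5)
Your proposal follows essentially the same route as the paper: It\^{o} on $|\bar Y|^2$, split off the reflection term using the Skorohod flat-off conditions and the obstacle inequalities to get $\int \bar Y\,\mathrm{d}\bar K\leq K\sup_r|\widehat\Xi^{\varepsilon,t,x}(r)|\,(K^{\varepsilon,t,x}(T)+K^{t,x}(T))$, Lipschitz on $g,f,h$, Cauchy--Schwarz against the uniform $L^{2}$-bound on the reflecting processes from Lemma~1, then Young plus Gronwall. One small remark: you explicitly take the supremum inside the expectation and invoke Burkholder--Davis--Gundy to control the stochastic integral, which the paper's own proof actually elides (it only writes $\mathbb{E}[|\bar Y(s)|^{2}]$ before concluding the supremum bound), so your treatment is slightly more complete on that point, but the underlying mechanism is identical.
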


\begin{proof} It follows from It\^o's formula that
\begin{eqnarray*}
&&{\Bbb E}\left[ \left| \left| Y^{\varepsilon ,t,x}\left( s\right)
-Y^{t,x}\left( s\right) \right| ^2\right| \right] +{\Bbb E}\left[
\int_t^T\left| Z^{\varepsilon ,t,x}\left( s\right) \right|
^2\text{d}s\right]
\\
&=&{\Bbb E}\left[ g\left( X^{\varepsilon ,t,x}\left( T\right)
\right)
-g\left( \chi ^{t,x}\left( T\right) \right) \right] +2{\Bbb E}\left[ \int_t^T%
\widehat{f}\left( s\right) \left( Y^{\varepsilon ,t,x}\left(
s\right)
-Y^{t,x}\left( s\right) \right) \text{d}s\right] \\
&&+2{\Bbb E}\left[ \int_t^T\left( Y^{\varepsilon ,t,x}\left(
s\right) -Y^{t,x}\left( s\right) \right) \text{d}\left(
K^{\varepsilon ,t,x}\left( s\right) -K^{t,x}\left( s\right) \right)
\right] ,
\end{eqnarray*}
where
\[
\widehat{f}\left( s\right) =f\left( s,X^{\varepsilon ,t,x}\left(
s\right) ,Y^{\varepsilon ,t,x}\left( s\right) ,Z^{\varepsilon
,t,x}\left( s\right) \right) -f\left( s,\chi ^{t,x}\left( s\right)
,Y^{t,x}\left( s\right) ,0\right) .
\]
Hence
\begin{eqnarray*}
&&{\Bbb E}\left[ \left| \left| Y^{\varepsilon ,t,x}\left( s\right)
-Y^{t,x}\left( s\right) \right| ^2\right| \right] +{\Bbb E}\left[
\int_t^T\left| Z^{\varepsilon ,t,x}\left( s\right) \right|
^2\text{d}s\right]
\\
&\leq &K{\Bbb E}\left[ \left| X^{\varepsilon ,t,x}\left( T\right)
-\chi
^{t,x}\left( T\right) \right| ^2\right] \\
&&+2K{\Bbb E}\left[ \int_t^T\left[ \left| X^{\varepsilon ,t,x}\left(
s\right) -\chi ^{t,x}\left( s\right) \right| +\left| Y^{\varepsilon
,t,x}\left( s\right) -Y^{t,x}\left( s\right) \right| +\left|
Z^{\varepsilon ,t,x}\left( s\right) \right| \right] \left(
Y^{\varepsilon ,t,x}\left(
s\right) -Y^{t,x}\left( s\right) \right) \right] \text{d}s \\
&&+2{\Bbb E}\left[ \int_t^T\left( Y^{\varepsilon ,t,x}\left(
s\right) -Y^{t,x}\left( s\right) \right) \text{d}\left(
K^{\varepsilon ,t,x}\left( s\right) -K^{t,x}\left( s\right) \right)
\right] .
\end{eqnarray*}
\begin{equation}
\tag{3.7}
\end{equation}
Since that $K^{\varepsilon ,t,x}$ (respectively $K^{t,x}$) grows when $%
Y^{\varepsilon ,t,x}\left( s\right) =h\left( s,X^{\varepsilon
,t,x}\left( s\right) \right) $ (respectively $Y^{t,x}\left( s\right)
=h\left( s,X^{t,x}\left( s\right) \right) $) only, and
$Y^{\varepsilon ,t,x}\left(
s\right) \geq h\left( s,X^{\varepsilon ,t,x}\left( s\right) \right) $ and $%
Y^{t,x}\left( s\right) \geq h\left( s,\chi ^{t,x}\left( s\right)
\right) $ for $s\in \left[ t,T\right] ,$ $P$-a.s.
\begin{eqnarray*}
&&\int_t^T\left( Y^{\varepsilon ,t,x}\left( s\right) -Y^{t,x}\left(
s\right) \right) \text{d}\left( K^{\varepsilon ,t,x}\left( s\right)
-K^{t,x}\left(
s\right) \right) \\
&=&\int_t^T\left( h\left( s,X^{\varepsilon ,t,x}\left( s\right)
\right) -Y^{t,x}\left( s\right) \right) \text{d}K^{\varepsilon
,t,x}+\int_t^T\left( h\left( s,\chi ^{t,x}\left( s\right) \right)
-Y^{\varepsilon ,t,x}\left(
s\right) \right) \text{d}K^{t,x}\left( s\right) \\
&\leq &\int_t^T\left| h\left( s,X^{\varepsilon ,t,x}\left( s\right)
\right) -h\left( s,\chi ^{t,x}\left( s\right) \right) \right|
\text{d}\left(
K^{\varepsilon ,t,x}\left( s\right) +K^{t,x}\left( s\right) \right) \\
&\leq &K\int_t^T\left| X^{\varepsilon ,t,x}\left( s\right) -\chi
^{t,x}\left( s\right) \right| \text{d}\left( K^{\varepsilon
,t,x}\left(
s\right) +K^{t,x}\left( s\right) \right) \\
&\leq &K\sup\limits_{t\leq s\leq T}\left| X^{\varepsilon ,t,x}\left(
s\right) -\chi ^{t,x}\left( s\right) \right| \int_t^T\text{d}\left(
K^{\varepsilon ,t,x}\left( s\right) +K^{t,x}\left( s\right) \right)
\end{eqnarray*}
\begin{equation}
\tag{3.8}
\end{equation}
Taking the expectation on two sides of (3.8), we have
\begin{eqnarray*}
&&{\Bbb E}\left[ \int_t^T\left( Y^{\varepsilon ,t,x}\left( s\right)
-Y^{t,x}\left( s\right) \right) \text{d}\left( K^{\varepsilon
,t,x}\left(
s\right) -K^{t,x}\left( s\right) \right) \right]  \\
&\leq &K\left( {\Bbb E}\left[ \left| \sup\limits_{t\leq s\leq
T}\left| X^{\varepsilon ,t,x}\left( s\right) -\chi ^{t,x}\left(
s\right) \right|
\right| ^2\right] \right) ^{\frac 12}\left( {\Bbb E}\left[ \left| \int_t^T%
\text{d}\left( K^{\varepsilon ,t,x}\left( s\right) +K^{t,x}\left(
s\right)
\right) \right| ^2\right] \right) ^{\frac 12} \\
&\leq &K\left( {\Bbb E}\left[ \sup\limits_{t\leq s\leq T}\left|
X^{\varepsilon ,t,x}\left( s\right) -\chi ^{t,x}\left( s\right)
\right| ^2\right] \right) ^{\frac 12}\left( {\Bbb E}\left[ \left(
K^{\varepsilon ,t,x}\right) ^2\left( T\right) +\left( K^{t,x}\right)
^2\left( T\right) \right] \right) ^{\frac 12}
\end{eqnarray*}
It follows from the assumption on $b,$ $\varepsilon \in \left(
0,1\right\rceil $ and Lemma 1 that ${\Bbb E}\left[ \left(
K^{\varepsilon ,t,x}\left( T\right) \right) ^2+\left( K^{t,x}\left(
T\right) \right)
^2\right] $ is bounded denoted by a positive constant $M$ independent of $%
\varepsilon .$ Now we turn back to (3.7). Thanks to the inequality
$ab\leq \frac{a^2+b^2}2$ and Grownwall's lemma$,$ we deduce
immediately that
\begin{eqnarray*}
&&{\Bbb E}\left[ \left| \left| Y^{\varepsilon ,t,x}\left( s\right)
-Y^{t,x}\left( s\right) \right| ^2\right| \right] +\frac 12{\Bbb
E}\left[ \int_t^T\left| Z^{\varepsilon ,t,x}\left( s\right) \right|
^2\text{d}s\right]
\\
&\leq &K^{^{\prime }}{\Bbb E}\left[ \left( \sup\limits_{t\leq s\leq
T}\left| \widehat{\Xi }^{\varepsilon ,t,x}\left( s\right) \right|
^2\right) ^{\frac 12}+\left| \widehat{\Xi }^{\varepsilon ,t,x}\left(
T\right) \right|
^2+\int_t^T\left| \widehat{\Xi }^{\varepsilon ,t,x}\left( s\right) \right| ^2%
\text{d}s\right] ,
\end{eqnarray*}
where $K^{^{\prime }}$ depends on $K$, $M$ and $T.$ The proof is
complete.
\end{proof}

\begin{remark}
As a consequence of Lemmas 9 and 11, we get
\[
\left\| G^\varepsilon \left( \varphi \right) -G\left( \varphi
\right) \right\| \leq \sqrt{\varepsilon }K^{^{^{\prime \prime
}}},\text{ }\varphi \in C\left( \left[ 0,T\right] :{\bf R}^n\right)
\]
where $K^{^{^{\prime \prime }}}$ is a constant related to
$K^{^{\prime }}.$
\end{remark}

Now we are able to give the proof of Theorem 8:

\begin{proof}By virtue of the contraction principle Theorem
4.2.23, page 133 in [2], we just need to show that $G^\varepsilon $, $%
\varepsilon \in \left( 0,1\right\rceil $ are continuous and $\left\{
G^\varepsilon \right\} $ converges uniformly to $G$ on every compact of $%
C\left( \left[ 0,T\right] :{\bf R}^n\right) ,$ as $\varepsilon $
tends to zero. As a matter of fact, since $u^\varepsilon $ is
continuous by Proposition 3, it is not hard to prove that
$G^\varepsilon $ is also
continuous. Next let us show the uniform convergence of the mapping $%
G^\varepsilon .$ It follows from Remark 12. \end{proof}

\end{document}